\newcommand{\bB}{{\mathbb{B}}}
\newcommand{\bN}{{\mathbb{N}}}
\newcommand{\bR}{{\mathbb{R}}}
  \newcommand{\C}{{\mathcal{C}}}
  \newcommand{\E}{{\mathcal{E}}}
  \newcommand{\M}{{\mathcal{M}}}
  \newcommand{\N}{{\mathcal{N}}}
\renewcommand{\phi}{\varphi}
\newcommand{\upchi}{{\raise.35ex\hbox{\ensuremath{\chi}}}}
\renewcommand{\leq}{\leqslant}
\renewcommand{\geq}{\geqslant}
\renewcommand{\ge}{\geqslant}
\newtheorem{thm}{Theorem}[section]
\newtheorem{cor}[thm]{Corollary}
\newtheorem{lemma}[thm]{Lemma}
\newtheorem{rk}[thm]{Remark}
\begin{document}

\title{An inequality in noncommutative $L_p$-spaces}
\date{}
\author[\'E. Ricard]{\'Eric Ricard}
\address{Laboratoire de Math{\'e}matiques Nicolas Oresme,
Universit{\'e} de Caen Normandie,
14032 Caen Cedex, France}
\email{eric.ricard@unicaen.fr}

\thanks{{\it 2010 Mathematics Subject Classification:} 46L51; 47A30.} 
\thanks{{\it Key words:} Noncommutative $L_p$-spaces}

\begin{abstract}
We prove that  for any (trace-preserving) 
conditional expectation $\E$ on a noncommutative $L_p$ with $p>2$, $Id-\E$  
is a contraction on the positive cone $L_p^+$.
\end{abstract}
\maketitle

\section{Introduction}

 It is plain that for positive real numbers $a, b\ge 0$ and
 $p\geq 2$, one has
$$ (a-b)(a^{p-1}-b^{p-1})\geq |a-b|^{p}.$$
Integrating the above inequality on some measure space $(\Omega,\mu)$ implies
that for $f,g\in L_p(\Omega,\mu)^+$, 
$$\int_\Omega \big(f(x)-g(x)\big)\big(f^{p-1}(x)-g^{p-1}(x)\big) {\rm
  d}\mu(x)\geq \big\| f-g\big\|_p^p.$$ In \cite{Must}, Mustapha
Mokhtar-Kharroubi notices that this inequality may be used to get
contractivity results on the positive cone of $L_p(\Omega,\mu)$. This
note originates from the question whether its noncommutative analogue
remains true. We provide a proof in the next section.
 We hope that the techniques involved there may be useful for 
further studies.
We end up by making explicit some results from \cite{Must}
for noncommutative $L_p$-spaces.

 We refer the reader to \cite{PX} for the definitions of $L_p$-spaces
 associated to semifinite von Neumann algebras or more general ones.
We also freely use basic results from \cite{Bha}.

\section{Results}
Let $(\M,\tau)$ be a semifinite von Neumann algebra.  We denote by
$f_p:\M^+\to\M^+$, the $p^{\rm th}$-power map and by $\M^{++}$ the set
of positive invertible elements. We will often refer to positivity of
the trace for the fact that if $a,b\in \M^+\cap L_1(\M)$, then 
$\tau(ab)\geq0$.

\begin{thm}\label{th}
Let $p\geq 2$ and $a,\,b\in L_p(\M)^+$, then 
$$\tau \big(|a-b|^p\big) \leq \tau\big((a-b)(a^{p-1}-b^{p-1})\big).$$
\end{thm}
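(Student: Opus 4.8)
The plan is to lift everything to the Hilbert space $L_2(\M)$ and read the right-hand side off a positive spectral measure built from $a$ and $b$. After the usual reductions (by density and continuity of both sides in $L_p$, I may assume $a,b\in\M\cap L_1$ are bounded and, adding $\eps$, invertible), set $h=a-b=h^*$, so that $\tau\big((a-b)(a^{p-1}-b^{p-1})\big)$ is real and the left-hand side is $\tau(|h|^p)=\|h\|_p^p$. Writing $f(x)=x^{p-1}$ and using the Daleckii--Krein (double operator integral) identity $f(a)-f(b)=\iint f^{[1]}(\lambda,\mu)\,dE^a_\lambda\,(a-b)\,dE^b_\mu$, where $f^{[1]}(\lambda,\mu)=\frac{\lambda^{p-1}-\mu^{p-1}}{\lambda-\mu}$ and $E^a,E^b$ are the spectral measures, I would obtain the exact representation
$$\tau\big((a-b)(a^{p-1}-b^{p-1})\big)=\iint \frac{\lambda^{p-1}-\mu^{p-1}}{\lambda-\mu}\;d\nu(\lambda,\mu),\qquad d\nu(\lambda,\mu)=\big\|dE^a_\lambda\,h\,dE^b_\mu\big\|_2^2 .$$
The key structural point is that $\nu\ge 0$: indeed $\tau\big(h\,dE^a_\lambda\,h\,dE^b_\mu\big)=\|dE^a_\lambda\,h\,dE^b_\mu\|_2^2$ by cyclicity and $h=h^*$.

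Next I would introduce the commuting self-adjoint operators $L_a$ and $R_b$ (left multiplication by $a$, right multiplication by $b$) on $L_2(\M)$, so that the representation above reads $\langle h,\Theta h\rangle$ with $\Theta=f^{[1]}(L_a,R_b)\ge 0$ a positive element of the abelian von Neumann algebra generated by $L_a,R_b$. Setting $\delta=L_a-R_b=L_a-R_b$ (self-adjoint, with symbol $\lambda-\mu$), the elementary scalar inequality of the introduction gives $(\lambda-\mu)(\lambda^{p-1}-\mu^{p-1})\ge|\lambda-\mu|^p$, hence $f^{[1]}(\lambda,\mu)\ge|\lambda-\mu|^{p-2}$ for all $\lambda,\mu\ge 0$. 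Since pointwise domination of symbols of commuting normals yields an operator inequality through the functional calculus, I get $\Theta\ge|\delta|^{p-2}$ and therefore
$$\tau\big((a-b)(a^{p-1}-b^{p-1})\big)=\langle h,\Theta h\rangle\ \ge\ \langle h,|\delta|^{p-2}h\rangle .$$
Because $\langle h,L_{|h|^{p-2}}h\rangle=\tau\big(h\,|h|^{p-2}\,h\big)=\tau(|h|^p)$, the whole theorem reduces to the single \emph{core inequality}
$$\big\langle h,\,|\delta|^{p-2}h\big\rangle\ \ge\ \big\langle h,\,L_{|h|^{p-2}}h\big\rangle ,\qquad \delta=L_a-R_b,\ h=a-b .$$

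The core inequality is where I expect the real work, and it should be the main obstacle. It is transparent for even exponents: when $p=2m$ one has $|\delta|^{p-2}=\delta^{2(m-1)}$, so the claim is $\|\delta^{m-1}h\|_2^2\ge\|h^m\|_2^2$, and for $p=4$ the exact identity $\delta h=ah-hb=h^2+[b,a]$ together with the orthogonality in $\langle\cdot,\cdot\rangle_2$ of the self-adjoint part $h^2$ and the anti-self-adjoint part $[b,a]$ (the cross term $\re\,\tau\big(h^2[b,a]\big)$ vanishes) yields the sharp excess $\|ah-hb\|_2^2-\|h^2\|_2^2=\|[a,b]\|_2^2\ge 0$. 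This commutator mechanism is exactly the noncommutative defect that is invisible in the scalar case, and it already proves the theorem for $p=4$ (and, by iterating the identity, for even $p$). For a general real $p\ge 2$ I would feed $|\delta|^{p-2}$ into the Löwner integral representation of $t\mapsto t^{p-2}$, turning the core inequality into a one-parameter family of resolvent estimates for $\delta$ that must be shown, after pairing against $h$, to dominate $\tau(|h|^p)$; establishing this resolvent positivity uniformly in the integration parameter is the delicate technical heart, and the even-exponent computation is the guide for what the favorable lower-order (commutator) contributions should be.
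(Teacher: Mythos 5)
Your reduction is sound as far as it goes: the double operator integral identity, the positivity of the measure $\nu$ (equivalently, viewing the right-hand side as $\langle h,\Theta h\rangle$ with $\Theta=f^{[1]}(L_a,R_b)$ a positive element of the abelian algebra generated by $L_a,R_b$), and the passage from the scalar bound $\frac{\lambda^{p-1}-\mu^{p-1}}{\lambda-\mu}\ge|\lambda-\mu|^{p-2}$ to $\Theta\ge|L_a-R_b|^{p-2}$ are all correct, and this setup is close in spirit to the paper's alternative argument for $p\in[3,4]$, which likewise works with $L_x,R_x$ in $\bB(L_2(\M))$. But you have not proved the theorem: all of the difficulty has been repackaged into the ``core inequality'' $\langle h,|L_a-R_b|^{p-2}h\rangle\ge\tau(|h|^p)$, which compares $|L_a-R_b|^{p-2}$ against $L_{|h|^{p-2}}=|L_h|^{p-2}$ --- two \emph{non-commuting} operators --- on the single vector $h$. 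Nothing in the proposal reduces this to a scalar or commutative fact; the programme via the L\"owner representation of $t\mapsto t^{p-2}$ is announced but not carried out, and you yourself flag it as the ``delicate technical heart.'' The paper's own remark is a warning here: for $p\in(2,3)$ the function $t\mapsto t^{p-2}$ is operator concave and the natural projection/convexity tricks reverse direction, so the range where your exponent $p-2$ lies in $(0,1)$ is exactly where the deferred step is hardest.

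The parenthetical claim that the $p=4$ computation extends ``by iterating the identity'' to all even $p$ is also not justified. Writing $\delta=L_a-R_b$, one gets $\delta^2h=h^3+[b,h^2]+\big(a[b,a]-[b,a]b\big)$; the term $[b,h^2]$ is anti-self-adjoint so ${\rm Re}\,\tau\big(h^3[b,h^2]\big)=0$ as in your $p=4$ step, but the remaining cross term works out to ${\rm Re}\,\langle h^3,\,a[b,a]-[b,a]b\rangle=-\tau\big([b,h^3][b,a]\big)$, a real number that you have not shown to be nonnegative (nor dominated by $\|\delta^2h-h^3\|_2^2$). So the orthogonality mechanism does not survive even one iteration, and as written the proposal establishes the theorem only for $p=2$ and $p=4$. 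By contrast, the paper's proof bypasses any such core inequality: for $p\in[2,3]$ it works directly on the trace via the integral representation of $s^{1+\theta}$ and resolvent estimates, and for $p\ge3$ it combines an algebraic identity with a Jensen inequality for the conditional expectation onto the commutative algebra generated by $\delta$.
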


\begin{proof}
First, as for any sequence of (finite) projections $p_i$ going to 1 strongly
in $\M$ and any $x\in L_q(\M)$ ($1\leq q<\infty$) $\|p_ixp_i-x\|_q\to 0$, we may assume that $\M$ is finite. Next by replacing $a$ and $b$ by $a+\varepsilon 1$ and $a+\varepsilon 1$ for some $\varepsilon>0$, we may also assume that $[a,b]\subset \M^{++}$ to avoid any unnecessary technical complication. 

We write $a=b+\delta$.
To prove the result, we distinguish according to the values of $p$.

\medskip

\noindent {\it Case 1: $p\in[2,3]$} 

\smallskip

{\it Case 1.a: $a\geq b$, i.e. $\delta\geq 0$.}

As $p-1=1+\theta$ with $\theta\in[0,1]$, we use the well known integral formula
\begin{equation}\label{intfor}
s^{1+\theta} = c_\theta \int_{\bR_+} \frac{t^\theta s^2}{s+t} \,
\frac{{\rm d}t}{t},\qquad \frac{s^2}{s+t}= s-t +\frac{t^2}{s+t} \,
.\end{equation}
Hence
$$\tau \big(\delta (a^{1+\theta}-b^{1+\theta})\big)=c_\theta \int_{\bR_+} t^\theta
\tau\Big( \delta \big( \delta + t^2 (b+\delta+t)^{-1}-t^2(b+t)^{-1}\big)\Big)\frac{{\rm d}t}{t}.$$
Recall the identity $(b+\delta+t)^{-1}-(b+t)^{-1}=-
(b+\delta+t)^{-1}\delta(b+t)^{-1}$. Using positivity of the trace
with $\delta(b+\delta+t)^{-1}\delta\leq \delta(\delta+t)^{-1}\delta$ and $(b+t)^{-1}\leq t^{-1}$:
$$\tau\Big( \delta \big( \delta + t^2 (b+\delta+t)^{-1}-t^2(b+t)^{-1}\big)\Big)
\geq \tau\big( \delta^2-t\delta(\delta+t)^{-1}\delta\big)=
\tau \big(\delta^3(\delta+t)^{-1}\big).$$
Integrating, we get the desired inequality  
$\tau \big(\delta (a^{1+\theta}-b^{1+\theta})\big)\geq \tau \big( \delta^{2+\theta}\big).$

\smallskip

{\it Case 1.b:} $\delta$ arbitrary with decomposition $\delta_+-\delta_-$ into positive and negative parts. We reduce it to the previous case by introducing $\alpha=a+\delta_-=b+\delta_+$, so that $\alpha\geq a,b$.
We have
\begin{eqnarray*}
\tau\big( (a-b)(a^{p-1}-b^{p-1})\big)&=&\tau\big( (a-\alpha)(a^{p-1}-\alpha^{p-1})\big)+\tau\big( (a-\alpha)(\alpha^{p-1}-b^{p-1})\big)\\ & & \;\;+\tau\big( (\alpha-b)(\alpha^{p-1}-b^{p-1})\big)+\tau\big( (\alpha-b)(a^{p-1}-\alpha^{p-1})\big).
\end{eqnarray*}
The first and the third terms are bigger than $\tau\big( \delta_-^p\big)$ and 
$\tau\big( \delta_+^p\big)$ by Case 1.a. Hence it suffices to check that the two remaining terms are positive. We use again the integral formula \eqref{intfor} and $\delta_+\delta_-=0$ and positivity of the trace
\begin{eqnarray*}
\tau\big( -\delta_-(\alpha^{p-1}-b^{p-1})\big)&=&-c_\theta \int_{\bR_+} t^\theta
\tau\Big( \delta_-\big( \delta_+ + t^2 (b+\delta_++t)^{-1}-t^2(b+t)^{-1}\big)\Big)\frac{{\rm d}t}{t}\\
&=& c_\theta \int_{\bR_+} t^\theta t^2
\tau\Big( \delta_-\big(  (b+t)^{-1}-(b+\delta_++t)^{-1}\big)\Big)\frac{{\rm d}t}{t}\geq 0.
\end{eqnarray*}
The last term is handled similarly.

\medskip

\noindent {\it Case 2: $p\geq 3$.}
 First, for any $n\in \bN$, $n\geq 1$, one easily checks by induction that we have the following identity
$$\tau\big((a-b)(a^{p-1}-b^{p-1})\big)=
\tau\big(\delta\big( (b+\delta)^{p-1-n}-b^{p-1-n}\big)b^{n}\big)+
\sum_{k=1}^n  \tau \big(\delta(b+\delta)^{p-1-k}\delta b^{k-1}\big).$$
Let $n\geq1$ be so that $p-1-n=1+\theta$ with $\theta\in[0,1[$. By positivity 
of the trace, we get
$$\tau\big((a-b)(a^{p-1}-b^{p-1})\big)\geq
\tau\big(\delta\big( (b+\delta)^{1+\theta}-b^{1+\theta}\big)b^{n}\big)+
  \tau \big(\delta^2(b+\delta)^{p-2}).$$
By the same computations as above thanks to \eqref{intfor}
\begin{eqnarray*}\tau\big(\delta\big( (b+\delta)^{1+\theta}-b^{1+\theta}\big)b^{n}\big)&=&
c_\theta \int_{\bR_+} t^\theta \tau\Big( \delta \big( \delta + t^2
(b+\delta+t)^{-1}-t^2(b+t)^{-1}\big)b^n\Big)\frac{{\rm d}t}{t}\\ &=&
c_\theta \int_{\bR_+} t^\theta\tau\Big( \delta \big( \delta - t^2
(b+\delta+t)^{-1}\delta(b+t)^{-1}\big)b^n\Big)\frac{{\rm
    d}t}{t}\\ &\geq & c_\theta \int_{\bR_+} t^\theta\tau\Big( \delta
\big( \delta - t\delta(b+t)^{-1}\big)b^n\Big)\frac{{\rm
    d}t}{t}\\ &=&c_\theta \int_{\bR_+} t^\theta\tau\Big( \delta^2
b(b+t)^{-1}b^n\Big)\frac{{\rm d}t}{t}\\&=& \tau\big( \delta^2
b^{n+\theta}\big)=\tau\big( \delta^2
b^{p-2}\big),
\end{eqnarray*}
where we used again positivity of the trace with $(b+\delta+t)^{-1}\leq t^{-1}$
and $0\leq (b+t)^{-1}b^n$.

To conclude let $\E$ to be the conditional expectation onto the
subalgebra $\N=\{\delta\}''$. As $\N$ is commutative,  the
Jensen inequality is valid; for any $\alpha\geq 1$ and $x\in \M^+$:
$\E (x^\alpha)\geq (\E x)^\alpha$. With $\alpha=p-2\geq1$,
$$\tau\big( \delta^2 b^{p-2}\big)=\tau\big( \delta^2
\E(b^{p-2})\big)\geq \tau\big( \delta^2 (\E b)^{p-2}\big), \qquad
\tau\big( \delta^2 (b+\delta)^{p-2}\big)\geq \tau\big( \delta^2
(\E(b+\delta))^{p-2}\big).$$ But with the usual decomposition
$\delta=\delta_+-\delta_-$, as $a,\,b\geq 0$, $\E b \geq \delta_-$ and
$\E (b+\delta) \geq \delta_+$. By commutativity of $\N$, we can
conclude 
$$\tau\big((a-b)(a^{p-1}-b^{p-1})\big)\geq \tau \big( \delta^2(\delta_-^{p-2}+\delta_+^{p-2})\big)=\tau \big( |\delta|^p\big).$$
\end{proof}

We provide an alternative proof when $p\in[3,4]$.

 Denote by $R_x$ and $L_x$ the
right and left multiplication operators by $x\in \M$ defined on all
$L_p(\M)$ ($1\leq p\leq \infty$). When $p=2$, for any $x\in \M^{sa}$,
the $C^*$-algebra generated in $\bB(L_2(\M))$ by $L_x$ and $R_x$ is
commutative and isomorphic to $\C(\sigma(x)\times \sigma(x))$ where $\sigma(x)$ is the spectrum of $x$.

\begin{lemma}\label{diff} For $p\geq1$, the map $f_p$ is  Fr\'echet differentiable on $\M^{++}$. For $\M$ finite, the derivative is given by the formula in $L_2(\M)$: 
$$\forall x\in \M^{++}, \forall h\in \M^{sa},\qquad 
{\rm D}_{x}f_p(h)=p\int_0^1 \Big( t L_x +(1-t)R_x\Big)^{p-1}(h) \;{\rm d}t.$$ 
\end{lemma}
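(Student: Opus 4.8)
The plan is to compute the derivative via the Riesz–Dunford holomorphic functional calculus, exploiting that $L_x$ and $R_x$ commute. Fix $x\in\M^{++}$; since $x$ is bounded and invertible, $\sigma(x)\subset[c,C]$ for some $0<c\le C$, and $\lambda\mapsto\lambda^p$ is holomorphic on a neighborhood of $[c,C]$ in $\bC\setminus(-\infty,0]$. Choosing a contour $\Gamma$ in $\{\re\lambda>0\}$ enclosing $[c,C]$, one has
\[
f_p(x)=x^p=\frac{1}{2\pi i}\int_\Gamma \lambda^p(\lambda-x)^{-1}\,d\lambda,
\]
and, the spectrum moving continuously, the same $\Gamma$ serves for all $x+sh$ with $|s|$ small (so that $x+sh\in\M^{++}$).

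First I would settle Fréchet differentiability and identify the derivative in the norm of $\M$, which needs only that $\M$ is unital and so covers the first assertion. From the second-order resolvent expansion $(\lambda-x-h)^{-1}-(\lambda-x)^{-1}=(\lambda-x)^{-1}h(\lambda-x)^{-1}+O(\|h\|^2)$, uniformly in $\lambda\in\Gamma$, differentiating under the integral yields
\[
{\rm D}_x f_p(h)=\frac{1}{2\pi i}\int_\Gamma \lambda^p(\lambda-x)^{-1}h(\lambda-x)^{-1}\,d\lambda,
\]
the quadratic remainder furnishing a genuine Fréchet derivative with respect to $\|\cdot\|_\M$.

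Next, specialize to $\M$ finite and read this in $L_2(\M)$, using $\M\hookrightarrow L_2(\M)$ continuously. Writing left and right multiplication, $(\lambda-x)^{-1}h(\lambda-x)^{-1}=(\lambda-L_x)^{-1}(\lambda-R_x)^{-1}(h)$, since $L_{(\lambda-x)^{-1}}=(\lambda-L_x)^{-1}$, likewise for $R$, and $L_x,R_x$ commute. Hence
\[
{\rm D}_x f_p(h)=\Big[\frac{1}{2\pi i}\int_\Gamma \lambda^p(\lambda-L_x)^{-1}(\lambda-R_x)^{-1}\,d\lambda\Big](h),
\]
and the bracketed operator lies in the commutative $\ca$-algebra generated by $L_x,R_x$, isomorphic to $\C(\sigma(x)\times\sigma(x))$. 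Under this isomorphism it becomes the function $(\alpha,\beta)\mapsto\frac{1}{2\pi i}\int_\Gamma \frac{\lambda^p}{(\lambda-\alpha)(\lambda-\beta)}\,d\lambda$, which by the residue theorem equals the divided difference $\frac{\alpha^p-\beta^p}{\alpha-\beta}$ (and $p\alpha^{p-1}$ on the diagonal $\alpha=\beta$).

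Finally I would match this with the stated integral. The substitution $u=t\alpha+(1-t)\beta$ gives $p\int_0^1(t\alpha+(1-t)\beta)^{p-1}\,dt=\frac{\alpha^p-\beta^p}{\alpha-\beta}$, so the function attached to $p\int_0^1(tL_x+(1-t)R_x)^{p-1}\,dt$ in $\C(\sigma(x)\times\sigma(x))$ coincides with the one above; applying both operators to $h$ yields the claimed formula. The main obstacle is the analytic bookkeeping rather than the idea: justifying differentiation under the contour integral with the remainder uniform on $\Gamma$, checking that the operator-valued integrals converge in $\bB(L_2(\M))$ so that the $\C(\sigma(x)\times\sigma(x))$ picture legitimately applies, and verifying that $t\mapsto(tL_x+(1-t)R_x)^{p-1}$ is norm-continuous (its spectrum staying in $[c,C]$) so that the Bochner integral defining the right-hand side exists.
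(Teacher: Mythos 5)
Your proposal is correct and follows essentially the same route as the paper: an exact resolvent expansion of the Riesz--Dunford integral for $x^p$ gives the Fr\'echet derivative, which is then read in the commutative $\ca$-algebra generated by $L_x$ and $R_x$, i.e.\ in $\C(\sigma(x)\times\sigma(x))$, where the symbol is the divided difference $\frac{\alpha^p-\beta^p}{\alpha-\beta}=p\int_0^1(t\alpha+(1-t)\beta)^{p-1}\,{\rm d}t$. The only cosmetic difference is that the paper uses the exact first-order identity $(x+h)^p-x^p=\frac{1}{2i\pi}\int_\gamma z^p(z-(x+h))^{-1}h(z-x)^{-1}\,{\rm d}z$ rather than a second-order expansion with remainder.
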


\begin{proof} 
Assume $x\geq 3 \delta$ for some $\delta>0$. Taking $h\in \M^{sa}$ with 
$\|h\| < \delta$, we may compute 
$f_p(x+h)$ using the holomorphic functional calculus by choosing a curve $\gamma$ with index 1 surrounding the spectrum of $x$ with 
$\gamma\subset \{ z \;|\; {\rm Re} z>0\}$ and ${\rm dist}(\gamma,\sigma(x))\geq2\delta$:
$$ (x+h)^p=\frac 1{2i\pi} \int_\gamma \frac {z^p}{z- (x+h) } {\rm d}z$$
Hence 
$$(x+h)^p-x^p= \frac 1{2i\pi} \int_\gamma z^p 
\big(z- (x+h)\big)^{-1}h\big(z- x\big)^{-1}{\rm d}z$$
It follows directly that $f_p$ is  Fr\'echet differentiable with derivative
$$ {\rm D}_{x}f_p(h)=\frac 1{2i\pi} \int_\gamma z^p \big(z-
x\big)^{-1}h\big(z- x\big)^{-1}{\rm d}z=\frac 1{2i\pi} \int_\gamma
z^p L_{(z- x)^{-1}}R_{(z- x)^{-1}}(h)\;{\rm d}z.$$ It then suffices to check
that the two formulas coincide when $\M$ is finite; as $\M\subset
L_2(\M)$, we do it for $h\in L_2(\M)$. But in $\bB(L_2(\M))$, this boils down to an equality in $\C(\sigma(x)\times \sigma(x))$ so that
we need only to justify that
$$\forall a,b\in \bR^{+*}, \qquad \frac 1{2i\pi} \int_\gamma \frac {z^p}{(z-a)(z-b) } {\rm d}z=p\int_0^1 \big( t a +(1-t) b\big)^{p-1} \;{\rm d}t.$$
The above computations yield that the left-hand side  is $\frac{a^p-b^p}{a-b}$ if $a\neq b$ and $pa^{p-1}$ if $a=b$ which clearly coincide with the right-hand side. 
\end{proof}

Assuming $\M$ finite and $a=b+\delta,b\in \M^{++}$ as above, the alternative proof when $p\in[3,4]$ relies on Lemma \ref{diff}:
\begin{eqnarray*}
\tau\big( (a-b)(a^{p-1}-b^{p-1})\big)&=&p\int_0^1 
\int_0^1 \tau\Big( \delta \Big( t L_{b+u\delta} +(1-t)R_{b+u\delta}\Big)^{p-2}(\delta) \Big)\;{\rm d}t\;{\rm d}u\\
&=&p\int_0^1 \int_0^1 \langle \delta, \Big( t L_{b+u\delta} +(1-t)R_{b+u\delta}\Big)^{p-2} (\delta)\rangle_{L_2(\M)}\;{\rm d}t\;{\rm d}u
\end{eqnarray*}

As $p-2\in [1,2]$, $f_{p-2}$ is operator convex, so that for any 
$m\in \bB(L_2(\M))^+$ and any projection $\E\in  \bB(L_2(\M))$, we have
$\E m^{p-2}\E \geq \big(\E m \E)^{p-2}$. We choose $\E$ to be the 
$L_2$-conditional expectation onto the subalgebra generated by $\delta$.
\begin{eqnarray*}
\langle \delta, \Big( t L_{b+u\delta} +(1-t)R_{b+u\delta}\Big)^{p-2} (\delta)\rangle_{L_2(\M)} &=& \langle \delta, \E \Big( t L_{b+u\delta} +(1-t)R_{b+u\delta}\Big)^{p-2}\E (\delta)\rangle_{L_2(\M)}\\
&\geq & \langle \delta, \Big( t \E L_{b+u\delta}\E  +(1-t)\E R_{b+u\delta}\E \Big)^{p-2}(\delta)\rangle_{L_2(\M)}\nonumber\\
&=& \langle \delta, \Big( t  L_{\E(b)+u\delta}\E  +(1-t) R_{\E(b)+u\delta}\E \Big)^{p-2}(\delta)\rangle_{L_2(\M)}\nonumber\\
&=& \langle \delta, \Big( t  L_{\E(b)+u\delta}  +(1-t) R_{\E(b)+u\delta} \Big)^{p-2}(\delta)\rangle_{L_2(\M)},
\end{eqnarray*}
where in the last equality we have used that $R_x$ and $\E$ commute if $x\in \delta''$. Tracking back the equalities, we obtain
$$\tau\big( \delta((b+\delta)^{p-1}-b^{p-1})\big)\geq 
\tau\big( \delta((\E(b)+\delta)^{p-1}-\E(b)^{p-1})\big)\geq \tau\big( |\delta|^p\big),$$
where the last inequality comes from the result in the commutative case.

\begin{rk} {\rm We point out that, for $p\in ]2,3[$, the result cannot be reduced to the commutative case as in the alternative proof. Indeed, $t\mapsto t^{p-2}$ is operator concave and  the first inequality right above  reverses.}
\end{rk}
\begin{rk} {\rm 
Let $\phi: L_p(\M)\to L_{p'}(\M)$ be the duality map so that 
 $\langle x,\phi(x)\rangle_{L_p(\M),L_{p'}(\M)}=\|x\|_p^p$ and 
$\|\phi(x)\|_{p'}= \|x\|_p^{p-1}$. When restricted to $L_p(\M)^+$, it is exactly $f_{p-1}$, so the result can be written as: for $a,\,b\in L_p(\M)^+$
$$\langle a-b,\phi(a)-\phi(b)\rangle_{L_p(\M),L_{p'}(\M) } \geq \big\|
a-b\big\|_p^p.$$
In this form, the inequality extends to general $L_p$-spaces in the sense of Haagerup, 
see \cite{Rmaz, HJX} for the arguments.}
\end{rk}

\begin{cor}\label{corexp} Let $(\M,\tau)$ be a semifinite von Neumann algebra and 
$\E:\M\to \M$ be a $\tau$-preserving conditional expectation, then for
  all $ p\geq2$ and $x\in L_p(\M)^+$,
\begin{equation}\label{eq1}\big\|x-\E x\big\|_p\leq   \big\|x\big\|_p.
\end{equation}
\end{cor}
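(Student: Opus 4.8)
The plan is to deduce the corollary from Theorem~\ref{th} by exploiting the defining properties of a $\tau$-preserving conditional expectation, namely the module property $\E(yxz)=y\E(x)z$ for $y,z$ in the range and the trace-preservation $\tau\circ\E=\tau$, which together give the adjoint relation $\tau(\E(x)y)=\tau(x\E(y))$ and in particular $\tau(\E(x)\,w)=\tau(x\,w)$ whenever $w$ lies in the range of $\E$. I would first set $a:=x$ and $b:=\E x$, both of which lie in $L_p(\M)^+$ (recall $\E$ is positive and extends contractively to each $L_q$), so that Theorem~\ref{th} applies and yields
\begin{equation*}
\big\|x-\E x\big\|_p^p \;=\; \tau\big(|x-\E x|^p\big)\;\le\; \tau\Big((x-\E x)\big(x^{p-1}-(\E x)^{p-1}\big)\Big).
\end{equation*}

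Next I would expand the right-hand side and use that $\E x$, and hence $(\E x)^{p-1}$, lie in the range $\N$ of $\E$. Writing out the four terms, I expect the trace-preservation and module properties to collapse the expression: since $(\E x)^{p-1}\in\N$ one has $\tau\big(\E x\cdot(\E x)^{p-1}\big)=\tau\big(x\cdot(\E x)^{p-1}\big)$, and likewise $\tau\big((\E x)\,x^{p-1}\big)=\tau\big(x^{p-1}\E x\big)=\tau\big(\E(x^{p-1})\,\E x\big)=\tau\big(x^{p-1}\,\E x\big)$ is manipulated so that the cross-terms cancel. The target is to show that the upper bound equals exactly $\tau\big(x\cdot x^{p-1}\big)=\tau(x^p)=\|x\|_p^p$; that is, the right-hand side of the displayed inequality should simplify to $\|x\|_p^p$, after which taking $p$-th roots gives \eqref{eq1}.

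The one point requiring genuine care is the cancellation of the cross-terms, so let me record it cleanly. Because $\E x=b\in\N$ and $b^{p-1}\in\N$, trace-preservation gives $\tau\big(b\,x^{p-1}\big)=\tau\big(\E(b\,x^{p-1})\big)=\tau\big(b\,\E(x^{p-1})\big)$ and $\tau\big(x\,b^{p-1}\big)=\tau\big(\E(x\,b^{p-1})\big)=\tau\big(\E(x)\,b^{p-1}\big)=\tau\big(b\,b^{p-1}\big)=\tau(b^p)$; similarly $\tau\big(b\,b^{p-1}\big)=\tau(b^p)$. Expanding $\tau\big((x-b)(x^{p-1}-b^{p-1})\big)=\tau(x^p)-\tau\big(x\,b^{p-1}\big)-\tau\big(b\,x^{p-1}\big)+\tau\big(b\,b^{p-1}\big)$ and substituting the identities above, the last three terms combine to $-\tau(b^p)-\tau\big(b\,\E(x^{p-1})\big)+\tau(b^p)=-\tau\big(b\,\E(x^{p-1})\big)$, and one checks $\tau\big(b\,\E(x^{p-1})\big)=\tau\big(\E(x)\,\E(x^{p-1})\big)=\tau\big(x\,\E(x^{p-1})\big)\ge 0$ by positivity of the trace, whence $\tau\big((x-b)(x^{p-1}-b^{p-1})\big)\le\tau(x^p)=\|x\|_p^p$.

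The main obstacle is thus purely bookkeeping: matching which factor sits inside $\E$ and invoking the module property in the correct direction so that every cross-term is rewritten as a trace of a product of two positive elements of $\N$. Once the algebra is arranged so that the surviving correction term $\tau\big(\E(x)\,\E(x^{p-1})\big)$ is manifestly nonnegative, the chain
\begin{equation*}
\|x-\E x\|_p^p\;\le\;\tau\Big((x-\E x)\big(x^{p-1}-(\E x)^{p-1}\big)\Big)\;\le\;\tau(x^p)\;=\;\|x\|_p^p
\end{equation*}
closes immediately, and \eqref{eq1} follows by taking $p$-th roots.
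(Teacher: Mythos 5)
Your proof is correct and follows essentially the same route as the paper: apply Theorem~\ref{th} with $a=x$, $b=\E x$, and use the trace-preserving and module properties of $\E$ to dispose of the cross-terms. The only difference is in the last step --- the paper observes $\tau\big((x-\E x)(\E x)^{p-1}\big)=0$ and finishes with H\"older on $\tau\big((x-\E x)x^{p-1}\big)$, whereas you expand fully and bound the right-hand side by $\tau(x^p)$ directly via positivity of $\tau\big(\E(x)\E(x^{p-1})\big)$, which is a cosmetic variation (with the minor advantage of not having to divide by $\|x-\E x\|_p^{p-1}$).
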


\begin{proof}
Apply the above theorem with  $a=x$ and $b=\E x$, as 
$\tau\big( (x-\E x)(\E x)^{p-1}\big)=0$, the H\"older inequality gives:
$$\big\|x-\E x\big\|_p^p\leq   \tau\big( (x-\E x)  x^{p-1}\big)\leq \big\|x-\E x\big\|_p \big\|x\big\|_{p}^{p-1}.$$
\end{proof}

\begin{rk}{\rm  The inequality \eqref{eq1} does not hold for $p<2$;
 a counterexample with $\M=\ell_\infty^2$ can be found in \cite{Must}.
There,  a slight extension of \eqref{eq1} is given; one can replace
$\E$ by any positive contractive projection $\C$ on $L_p(\M)$.
}
\end{rk}

As explained in \cite{Must}, the main inequality applies more
generally to semigroups. 

\begin{cor}
Let $(\M,\tau)$ be a semifinite von Neumann algebra and $(T_t)_{t\geq
  0}$ be a  trace preserving unital positive strongly continuous  semigroup on
$\M$. For $\lambda>0$, let $R_{\lambda}=
\int_0^\infty e^{-\lambda t} T_t {\rm d}t$ be its resolvent.
Then 
for all $ p\geq 2$, $\lambda>0$ and $x\in L_p(\M)^+$, 
$$\big\|x-\lambda R_\lambda x\big\|_p\leq   \big\|x\big\|_p.$$ 
\end{cor}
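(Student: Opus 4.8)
The plan is to follow the proof of Corollary~\ref{corexp} almost verbatim, replacing the identity $\tau\big((x-\E x)(\E x)^{p-1}\big)=0$ (which used that $\E$ is a conditional expectation) by an inequality expressing dissipativity of the generator of $(T_t)$. Write $S=\lambda R_\lambda$ and $y=Sx$. Since every $T_t$ is positive, $y=\lambda\int_0^\infty e^{-\lambda t}T_t x\,{\rm d}t$ lies in $L_p(\M)^+$; and since every $T_t$ is positive, unital and trace-preserving it is a contraction on $\M$ and on $L_1(\M)$, hence on $L_p(\M)$ by interpolation, so $S$ is an $L_p$-contraction and $\|y\|_p\le\|x\|_p$. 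Applying Theorem~\ref{th} with $a=x$ and $b=y$ and splitting the right-hand side gives
$$\|x-y\|_p^p\le \tau\big((x-y)x^{p-1}\big)-\tau\big((x-y)y^{p-1}\big),$$
and the H\"older inequality bounds the first term by $\|x-y\|_p\,\|x\|_p^{p-1}$. Thus, exactly as in Corollary~\ref{corexp}, everything reduces to showing that the second term is nonnegative, i.e.\ $\tau\big((x-y)y^{p-1}\big)\ge0$.

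To prove this I would bring in the infinitesimal generator $A$ of the semigroup extended to $L_p(\M)$, so that $R_\lambda=(\lambda-A)^{-1}$. The resolvent equation then gives $y\in\operatorname{dom}A$ and $Ay=\lambda(y-x)$, so the desired inequality is equivalent to $\tau\big((Ay)y^{p-1}\big)\le0$. To obtain this, consider $\psi(s)=\tau\big((T_s y)\,y^{p-1}\big)$ for $s\ge0$. As $T_sy\ge0$ and $y^{p-1}\ge0$, H\"older gives $\psi(s)\le\|T_sy\|_p\,\|y^{p-1}\|_{p'}\le\|y\|_p\cdot\|y\|_p^{p-1}=\psi(0)$, so $\psi$ attains its maximum over $[0,\infty)$ at the endpoint $s=0$. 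Since $y\in\operatorname{dom}A$, the curve $s\mapsto T_sy$ is differentiable at $0$ in $L_p(\M)$ with derivative $Ay$, and pairing with the fixed $y^{p-1}\in L_{p'}(\M)$ shows that $\psi$ is right-differentiable at $0$ with $\psi'(0^+)=\tau\big((Ay)y^{p-1}\big)$. A maximum at the left endpoint forces $\psi'(0^+)\le0$, which is precisely $\tau\big((Ay)y^{p-1}\big)\le0$; substituting $Ay=\lambda(y-x)$ gives $\tau\big((x-y)y^{p-1}\big)\ge0$.

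Putting the pieces together yields $\|x-y\|_p^p\le\|x-y\|_p\,\|x\|_p^{p-1}$, and dividing by $\|x-y\|_p^{p-1}$ (the case $x=y$ being trivial) gives the claim $\|x-\lambda R_\lambda x\|_p\le\|x\|_p$. I expect the genuine cone inequality to be the easy part here, since it is supplied by Theorem~\ref{th}; the main obstacle is the functional-analytic bookkeeping. One must verify that the positive, unital, trace-preserving, strongly continuous semigroup $(T_t)$ on $\M$ extends to a $C_0$-semigroup of contractions on $L_p(\M)$, so that the generator $A$, its domain, and the identity $R_\lambda=(\lambda-A)^{-1}$ all make sense on $L_p(\M)$ and $y=\lambda R_\lambda x$ really belongs to $\operatorname{dom}A$. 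This is routine interpolation and semigroup theory, but it is where the care lies; an alternative that sidesteps the generator is to write $x-y=\lambda\int_0^\infty e^{-\lambda t}(x-T_tx)\,{\rm d}t$ and argue directly, though the derivative-at-$0$ formulation above seems cleanest.
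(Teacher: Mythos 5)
Your argument is correct, and it follows the paper's proof verbatim up to the decisive reduction: apply Theorem \ref{th} with $a=x$, $b=\lambda R_\lambda x$, control $\tau\big((x-\lambda R_\lambda x)x^{p-1}\big)$ by H\"older, and reduce everything to $\tau\big((x-\lambda R_\lambda x)(\lambda R_\lambda x)^{p-1}\big)\geq 0$. Where you diverge is in proving that last positivity. The paper stays entirely at the level of resolvents: it writes $x-\lambda R_\lambda x=\lim_{t\to\infty}t(1-tR_t)R_\lambda x$ and uses that $tR_t$, being positive, unital and trace preserving, is an $L_p$-contraction, so that $\tau\big(tR_t(R_\lambda x)\,(R_\lambda x)^{p-1}\big)\leq \|R_\lambda x\|_p^p$ by H\"older and the difference is nonnegative. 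You instead pass to the generator $A$ of the extended $C_0$-semigroup on $L_p(\M)$, note $Ay=\lambda(y-x)$ for $y=\lambda R_\lambda x$, and obtain $\tau\big((Ay)y^{p-1}\big)\leq 0$ by differentiating $\psi(s)=\tau\big((T_sy)y^{p-1}\big)$ at $s=0^+$, using the same H\"older estimate (now for $T_s$ rather than $tR_t$) to see that $\psi$ is maximized at $0$. The two arguments are the same dissipativity fact viewed from opposite ends of the Hille--Yosida correspondence; the paper's version is slightly leaner because it never needs the generator, its domain, or the identification $R_\lambda=(\lambda-A)^{-1}$ on $L_p$ --- precisely the ``functional-analytic bookkeeping'' you flag as the delicate part. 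That bookkeeping is standard but not free (one must check that the extension of $(T_t)$ to $L_p(\M)$ is strongly continuous and that the $\M$-level resolvent agrees with the $L_p$-level one), so if you want to minimize overhead, the resolvent-approximation route is preferable; your version, on the other hand, makes the dissipativity interpretation explicit.
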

\begin{proof}
We proceed as in Corollary \ref{corexp}. We apply Theorem \ref{th} with
$a=x$ and $b=\lambda R_\lambda x$ to get
\begin{eqnarray*}
\big\|x- \lambda R_\lambda x\big\|_p^p &\leq&   \tau\big( (x-\lambda R_\lambda x)  
x^{p-1}\big)-\tau\big( (x-\lambda R_\lambda x)  
(\lambda R_\lambda x)^{p-1}\big)\\ &\leq&
 \big\|x-\lambda R_\lambda x\big\|_p \big\|x\big\|_{p}^{p-1}-\tau\big( (x-\lambda R_\lambda x)  
(\lambda R_\lambda x)^{p-1}\big).
\end{eqnarray*}
To conclude, it suffices to note  that $\tau\big( (x-\lambda R_\lambda x)(R_\lambda x)^{p-1}\big)\geq 0$. 

It can be checked by approximations thanks to the resolvent formula: $x-\lambda R_\lambda x=\lim_{t\to \infty
}t(1-tR_t)R_\lambda x$. Indeed, recall that $tR_t$ is positive unital and trace preserving and hence 
a contraction on $L_p$ so that 
$$\tau\big( tR_t(R_\lambda x). (R_\lambda
x)^{p-1}\big)\leq \big\| R_\lambda x\big\|_p^p\qquad \textrm{and}\qquad
\tau\big( \big(t(1-tR_t)R_\lambda x\big)(R_\lambda x)^{p-1}\big)\geq 0.$$
\end{proof}
\bibliographystyle{plain}

\begin{thebibliography}{1}

\bibitem{Bha}
Rajendra Bhatia.
\newblock {\em Matrix analysis}, volume 169 of {\em Graduate Texts in
  Mathematics}.
\newblock Springer-Verlag, New York, 1997.

\bibitem{HJX}
Uffe Haagerup, Marius Junge, and Quanhua Xu.
\newblock A reduction method for noncommutative {$L_p$}-spaces and
  applications.
\newblock {\em Trans. Amer. Math. Soc.}, 362(4):2125--2165, 2010.

\bibitem{Must}
Mustapha Mokhtar-Kharroubi.
\newblock Contractivity theorems in real ordered Banach spaces with
  applications to relative operator bounds, ergodic projections and conditional
  expectations.
\newblock Preprint, https://hal.archives-ouvertes.fr/hal-01148968.

\bibitem{PX}
Gilles Pisier and Quanhua Xu.
\newblock Non-commutative {$L^p$}-spaces.
\newblock In {\em Handbook of the Geometry of {B}anach Spaces, {V}ol.\ 2},
  pages 1459--1517. North-Holland, Amsterdam, 2003.

\bibitem{Rmaz}
\'Eric Ricard.
\newblock H\"older estimates for the noncommutative Mazur maps.
\newblock {\em Arch. Math. (Basel)}   104  (2015),  no. 1, 37--45.

\end{thebibliography}

\end{document}